\newtheorem{thm}{Theorem}
\newtheorem{cor}[thm]{Corollary}
\newtheorem{lem}[thm]{Lemma}
\newtheorem{rem}[thm]{Remark}
\newenvironment{proof}{\begin{trivlist}
                       \item[]{\bf Proof.}
                       \hspace{0cm}}{\hfill $\Box$
                       \end{trivlist}}
\begin{document}
\title{A nonlinear inequality and applications}

\author{N. S. Hoang$\dag$\footnotemark[1]\quad and\quad 
A. G. Ramm$\dag$\footnotemark[3]
\\
\\
$\dag$Mathematics Department, Kansas State University,\\
Manhattan, KS 66506-2602, USA
}

\renewcommand{\thefootnote}{\fnsymbol{footnote}}
\footnotetext[1]{Email: nguyenhs@math.ksu.edu}
\footnotetext[3]{Corresponding author. Email: ramm@math.ksu.edu}
\date{}
\maketitle

\begin{abstract} \noindent 
A nonlinear inequality is formulated in the paper.
An estimate of the rate of decay of solutions to this inequality is 
obtained.
This inequality is of interest in a study of dynamical systems 
and nonlinear evolution equations. It can be applied to
the study of global existence of solutions to nonlinear PDE.

{\bf Keywords.}
Nonlinear inequality; Dynamical Systems Method (DSM); stability.

{\bf MSC:}
65J15, 65J20, 65N12, 65R30, 47J25, 47J35.
\end{abstract}

\section{Introduction}

In this paper the following differential inequality 
\begin{equation}
\label{neq1} 
\dot{g}(t) \le -\gamma(t)g(t) + \alpha(t)g^p(t) +
\beta(t),\qquad t\ge t_0,\quad p>1, \quad \dot{g}=\frac{dg}{dt},
\end{equation} 
where $g(t)\ge 0$, 
is studied.  In equation \eqref{neq1}, $\alpha(t),\beta(t)$ and 
$\gamma(t)$ are continuous
functions, defined on $[t_0,\infty)$, where $t_0\ge 0$ is a 
fixed
number and $\alpha(t)\ge 0,\forall t\ge t_0$. 
Under the assumptions of Theorem~\ref{thm1} (see below)
 we prove that there exist solutions to inequality \eqref{neq1}
and all solutions to inequality \eqref{neq1} are defined for all $t\ge t_0$. 
Estimates of the rate of decay of solutions to this inequality
are obtained and formulated in \eqref{3eq10} and \eqref{hehe}. These new results can be used in a study of dynamical 
systems
and nonlinear evolution equations.  For example, inequality \eqref{neq1}
is used in  Section~\ref{sec3} in a study of the Dynamical Systems 
Method (DSM) for solving 
nonlinear equations of the type $F(u)=f$, where $F:H\to H$ is a monotone 
operator, 
and $H$ 
is a Hilbert space.   The DSM we study
in Section \ref{sec3} is continuous analog of the  regularized 
Newton's method
for solving equations with monotone operators. 
The local boundedness of the second Fr\'{e}chet derivative of $F$ was
assumed earlier in a study of a similar method, inequality \eqref{neq1} with $p=2$ 
was used, and
an estimate for the decay of $g(t)$ as $t\to \infty$ was derived  
with the use of a comparison lemma and a
closed form solution to a special Riccati's equation (see \cite{R499}).  
The argument from \cite{R499} 
is not possible to extend  to the case $p\neq 2$.
The estimate of solutions to  inequality \eqref{neq1} with $p=2$
was also used in  \cite{R549} in a study of a DSM for solving
ill-posed operator equations.  

In this paper sufficient conditions on $\alpha,\beta$ and $\gamma$
are found, which 
yield the global existence and an estimate of the rate of decay of solutions to
\eqref{neq1}. The method of the proof of these  results 
is different from that in \cite{R499}.
It does not require the knowledge of a closed 
form solution of a differential equation. Discrete analogs of 
the inequality \eqref{neq1} are also found (see Theorems~\ref{lem1} and
\ref{cor1}).  
These new results can be applied to the study of the global
existence of solutions to nonlinear PDE.

The paper is organized as follows:  In Section \ref{sec2}, the main
results, namely, Theorems \ref{thm1}, \ref{thm2}, \ref{lem1}, \ref{cor1} and \ref{cor4} are formulated and
proved. An upper bound for $g(t)$ is obtained under some conditions on
$\alpha,\beta,\gamma$. This upper bound gives a sufficient condition
for the relation $\lim_{t\to \infty}g(t)=0$ to hold, and also
gives a rate of decay of $g(t)$ as $t\to \infty$.
 In Section~\ref{sec3} a version of the
DSM is studied. The main result in this Section is Theorem \ref{thm8}. 
In its proof an application of Theorem~\ref{thm1}  is essential. 

\section{Main results}
\label{sec2}

\begin{thm}
\label{thm1}
Let $\alpha(t),\beta(t)$ and $\gamma(t)$ be continuous 
functions on $[t_0,\infty)$ and $\alpha(t)\ge0,\forall t\ge t_0$.
Suppose there exists a function $\mu(t)>0$, $\mu\in C^1[t_0,\infty)$,
such that
\begin{align}
\label{1eq4}
\frac{\alpha(t)}{\mu^p(t)}+ \beta(t) &\le \frac{1}{\mu(t)}\bigg{[}\gamma -
\frac{\dot{\mu}(t)}{\mu(t)}\bigg{]}.
\end{align}
Let $g(t)\ge 0$ be a solution to inequality \eqref{neq1} such that
\begin{equation}
\label{1eq6}
\mu(t_0)g(t_0)    < 1.
\end{equation}
Then $g(t)$ exists globally and the following estimate holds:
\begin{equation}
\label{3eq10}
0\le g(t) < \frac{1}{\mu(t)},\qquad \forall t\ge t_0.
\end{equation}
Consequently, if $\lim_{t\to\infty} \mu(t)=\infty$, then
\begin{equation}
\lim_{t\to\infty} g(t)= 0.
\end{equation}
\end{thm}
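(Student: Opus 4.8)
The plan is to reduce the problem to a scalar comparison for the rescaled quantity $v(t):=\mu(t)g(t)$, for which the barrier $v\equiv 1$ will turn out to be a supersolution precisely because of hypothesis \eqref{1eq4}. First I would compute, using \eqref{neq1} together with $\dot v=\mu\dot g+\dot\mu\,g$ and $g=v/\mu$, the differential inequality
\begin{equation*}
\dot v(t)\le \Psi(t,v(t)),\qquad \Psi(t,v):=\Big(\frac{\dot\mu(t)}{\mu(t)}-\gamma(t)\Big)v+\frac{\alpha(t)}{\mu^{p-1}(t)}v^p+\beta(t)\mu(t),
\end{equation*}
valid on the interval of existence of $g$. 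Multiplying \eqref{1eq4} by $\mu(t)>0$ shows that it is equivalent to $\Psi(t,1)\le 0$ for all $t\ge t_0$, so the constant function $z\equiv1$ satisfies $\dot z\ge\Psi(t,z)$; that is, $z\equiv 1$ is a supersolution while $v$ is a subsolution, and by \eqref{1eq6} we start strictly below the barrier, $v(t_0)<1=z(t_0)$.

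The crux is to upgrade the strict initial inequality $v(t_0)<1$ to the strict bound $v(t)<1$ for all $t$, which is exactly \eqref{3eq10}. The main obstacle is that \eqref{1eq4} is allowed to hold with equality, so $\Psi(t,1)$ may vanish; a naive first-crossing argument then only yields $\dot v\le 0$ at a putative first contact point and does \emph{not} by itself exclude a tangential touching $v=1$. To rule this out I would exploit that, for each $T>t_0$, the coefficients of $\Psi$ are bounded on $[t_0,T]$ and $v\mapsto v^p$ is $C^1$, so $\Psi(t,\cdot)$ is Lipschitz in its second argument, uniformly for $t\in[t_0,T]$ and $v$ near $1$, with some constant $L$. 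Then for $v\le1$ one has $\Psi(t,v)=\Psi(t,1)+\big(\Psi(t,v)-\Psi(t,1)\big)\le L(1-v)$, whence $\dot v\le L(1-v)$, i.e. $\frac{d}{dt}\big[(1-v)e^{Lt}\big]\ge0$ as long as $v\le1$. This produces the quantitative lower bound $1-v(t)\ge (1-v(t_0))e^{-L(t-t_0)}>0$, so $v$ can never actually reach $1$; a standard supremum/continuation argument applied to $T_{\max}:=\sup\{t:\ v\le1\text{ on }[t_0,t]\}$ then forces $v(t)<1$ throughout, which is \eqref{3eq10}.

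Finally I would deduce global existence from the bound just obtained: since $0\le g(t)<1/\mu(t)$ and $1/\mu$ is finite and continuous on all of $[t_0,\infty)$, the solution $g$ stays bounded on every compact subinterval of its maximal interval of existence $[t_0,T)$; were $T$ finite, $g$ would remain bounded as $t\to T^-$ and hence could be continued past $T$, contradicting maximality, so $T=\infty$ and \eqref{3eq10} holds for all $t\ge t_0$. The concluding assertion is then immediate: if $\mu(t)\to\infty$ then $0\le g(t)<1/\mu(t)\to0$, so $g(t)\to0$ by squeezing. I expect the only genuinely delicate point to be the tangency issue in the second paragraph; everything else is the substitution, the algebraic rewriting of \eqref{1eq4}, and a routine continuation argument.
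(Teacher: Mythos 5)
Your proposal is correct, and it reaches \eqref{3eq10} and the global existence claim by a genuinely different mechanism than the paper. The paper substitutes $w(t)=g(t)e^{\int_{t_0}^t\gamma(s)ds}$, which eliminates the linear term entirely: the transformed inequality $\dot w\le a(t)w^p+b(t)$ has a right-hand side that is monotone increasing in the state variable (since $a,b\ge 0$ and $w\ge 0$), and condition \eqref{1eq4} says exactly that this right-hand side evaluated along the barrier $\eta(t)=e^{\int_{t_0}^t\gamma(s)ds}/\mu(t)$ is at most $\dot\eta(t)$. Monotonicity then gives the pointwise implication that $w\le\eta$ on $[t_0,T]$ forces $\dot w\le\dot\eta$ on $[t_0,T]$ (the paper's Claim 1), and integrating this from $t_0$ shows the gap $\eta-w$ is non-decreasing, hence stays $\ge\eta(t_0)-w(t_0)>0$; this disposes of precisely the tangency problem you isolate, with no Lipschitz constants and no Gronwall-type lemma. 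Your route keeps the linear term: your $\Psi(t,v)$ contains $(\dot\mu/\mu-\gamma)v$, whose coefficient can have either sign, so the monotone-comparison step is unavailable, and you compensate with the uniform local Lipschitz property of $\Psi(t,\cdot)$ on compact time intervals plus a Gronwall estimate, obtaining the quantitative bound $1-v(t)\ge(1-v(t_0))e^{-L(t-t_0)}>0$. Both proofs are complete and rigorous at the same delicate point (your Lipschitz bound is legitimate: on $[t_0,T]\times[0,1]$ the coefficients are bounded and $v\mapsto v^p$ is $C^1$), and the trade-off is real: your argument is more robust, needing only Lipschitz continuity rather than a substitution engineered to produce monotonicity, and it quantifies how slowly the solution can approach the barrier; the paper's argument is more elementary, avoids compact-interval constants altogether, and yields the cleaner qualitative fact that the gap $\eta-w$ never decreases. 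Your continuation argument for global existence and the final squeezing step coincide in substance with the paper's.
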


\begin{proof}
Denote $w(t):=g(t)e^{\int_{t_0}^t\gamma(s)ds}$. Then inequality 
\eqref{neq1} takes the form
\begin{equation}
\label{eq6}
\dot{w}(t) \le a(t)w^p(t) + b(t),\qquad w(t_0)=g(t_0):= g_0,
\end{equation}
where
\begin{equation}
a(t):=\alpha(t) e^{(1-p)\int_{t_0}^t\gamma(s)ds},
\qquad b(t):=\beta(t) e^{\int_{t_0}^t\gamma(s)ds}.
\end{equation}
Denote 
\begin{equation}
\label{eq8}
\eta(t) = \frac{e^{\int_{t_0}^t\gamma(s)ds}}{\mu(t)}.
\end{equation}
From inequality \eqref{1eq6} and relation \eqref{eq8} one gets
\begin{equation}
\label{eq9}
w(t_0)=g(t_0) < \frac{1}{\mu(t_0)}=\eta(t_0).
\end{equation}
It follows from the inequalities \eqref{1eq4}, \eqref{eq6} 
and \eqref{eq9} that
\begin{equation}
\label{eq10}
\begin{split}
\dot{w}(t_0) &\le \alpha(t_0)\frac{1}{\mu^{p}(t_0)}
+ \beta(t_0)
\le \frac{1}{\mu(t_0)}\bigg{[}\gamma -
\frac{\dot{\mu}(t_0)}{\mu(t_0)}\bigg{]}
 = \frac{d}{dt} \frac{e^{\int_{t_0}^t\gamma(s)ds}}
{\mu(t)}\bigg{|}_{t=t_0} = \dot{\eta}(t_0).
\end{split}
\end{equation}
From the inequalities \eqref{eq9} and \eqref{eq10} it follows 
that there exists $\delta>0$ such that
\begin{equation}
w(t) < \eta(t),\qquad  t_0\le t \le t_0 + \delta.
\end{equation}
To continue the proof we need two Claims.

{\it Claim 1.}  {\it If} 
\begin{equation}
w(t) \le \eta(t),\qquad \forall t \in[ t_0, T],\quad T>t_0,
\end{equation}
{\it then} 
\begin{equation}
\dot{w}(t) \le \dot{\eta}(t),\qquad  \forall t \in[ t_0, T].
\end{equation}
{\it Proof of Claim 1.}
 
It follows from inequalities \eqref{1eq4}, \eqref{eq6} 
and the inequlity $w(T)\leq \eta(T)$,  that
\begin{equation}
\label{eq14}
\begin{split}
\dot{w}(t) &\le e^{(1-p)\int_{t_0}^t\gamma(s)ds}\alpha(t)
\frac{e^{p\int_{t_0}^t\gamma(s)ds}}{\mu^{p}(t)}
+ \beta(t)e^{\int_{t_0}^t\gamma(s)ds}\\
&\le \frac{e^{\int_{t_0}^t\gamma(s)ds}}{\mu(t)}
\bigg{[}\gamma -\frac{\dot{\mu}(t)}{\mu(t)}\bigg{]}\\
& = \frac{d}{dt} \frac{e^{\int_{t_0}^t\gamma(s)ds}}
{\mu(t)}\bigg{|}_{t=t} = \dot{\eta}(t),\qquad \forall t\in [t_0,T].
\end{split}
\end{equation}
{\it Claim 1} is proved. $\hfill$ $\Box$

Denote 
\begin{equation}
\label{eq12}
T:=\sup \{\delta \in\mathbb{R}^+: w(t) < \eta(t),\, 
\forall t \in[t_0, t_0 + \delta]\}.
\end{equation}
{\it Claim 2.}  {\it One has $T=\infty$.} 

Claim 2 guarantees the existence of a nonnegative solution to inequality \eqref{neq1}
for all $t\ge t_0$. Also, it guarantees the global existence for any solution to 
inequality \eqref{neq1} satisfying \eqref{1eq6}. 

{\it Proof of Claim 2.}

Assume the contrary, i.e.,  $T<\infty$. 
From the definition of $T$ and the continuity of $w$ and $\eta$ one 
gets
\begin{equation}
\label{eq13}
w(T) \le \eta(T).
\end{equation}
It follows from 
inequality \eqref{eq13} and {\it Claim 1} that 
\begin{equation}
\label{2eq18}
\dot{w}(t)\le \dot{\eta}(t),\qquad \forall t\in [t_0,T].
\end{equation}
This implies 
\begin{equation}
\label{2eq19}
w(T)-w(t_0) = \int_{t_0}^T \dot{w}(s)ds \le \int_{t_0}^T \dot{\eta}(s)ds 
= \eta(T)-\eta(t_0).
\end{equation}
Since $w(t_0)<\eta(t_0)$ by assumption \eqref{1eq6}, it follows from inequality \eqref{2eq19} that
\begin{equation}
\label{eq20}
w(T) < \eta(T).
\end{equation}
Inequality \eqref{eq20} and inequality \eqref{2eq18} with $t=T$ imply that
 there exists an $\epsilon>0$ such that
\begin{equation}
w(t) < \eta(t),\qquad  T\le t \le T + \epsilon.
\end{equation}
This contradicts the definition of $T$ in \eqref{eq12}, and the 
contradiction proves the desired conclusion $T=\infty$. 

Claim 2 is proved. $\hfill$ $\Box$ 

It follows from the definitions of $\eta(t)$ and $w(t)$ and from 
the relation $T=\infty$ that
\begin{equation}
g(t) = e^{-\int_{t_0}^t\gamma(s)ds} w(t)< 
e^{-\int_{t_0}^t\gamma(s)ds}\eta(t) = \frac{1}{\mu(t)},\qquad \forall t> t_0.
\end{equation}
Theorem~\ref{thm1} is proved.
\end{proof}

\begin{thm}
\label{thm2}
Let $\alpha, \beta,\gamma$ and $g$ be as in Theorem~\ref{thm1}, 
$\mu(t)>0$, $\mu\in C^1[t_0,\infty)$, and
let conditions \eqref{1eq4} holds. 
Assume also that
\begin{equation}
\label{hihi}
g(t_0)\mu(t_0) \le 1.
\end{equation}
Then the following inequality holds:
\begin{equation}
\label{hehe}
g(t)\leq \frac{1}{\mu(t)},\qquad \forall t\ge t_0.
\end{equation}
\end{thm}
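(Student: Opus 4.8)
The plan is to deduce Theorem~\ref{thm2} from Theorem~\ref{thm1} by a limiting argument, using the strict inequality \eqref{1eq6} as an approximation to the non-strict hypothesis \eqref{hihi}. The only difference between the two theorems is that Theorem~\ref{thm1} assumes the \emph{strict} initial bound $\mu(t_0)g(t_0)<1$ and delivers the \emph{strict} conclusion $g(t)<1/\mu(t)$, whereas here we are given only $\mu(t_0)g(t_0)\le1$ and must prove the non-strict conclusion $g(t)\le1/\mu(t)$. So the whole task is to relax the boundary condition from $<$ to $\le$, and the natural device is to perturb the problem slightly so that the strict hypothesis of Theorem~\ref{thm1} applies, then pass to the limit.

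First I would observe that if $\mu(t_0)g(t_0)<1$ already, we are in the situation of Theorem~\ref{thm1}, which gives $g(t)<1/\mu(t)\le1/\mu(t)$ and we are done. So the only case requiring work is the boundary case $\mu(t_0)g(t_0)=1$. In that case I would introduce a perturbed initial value. One clean way: for small $\varepsilon>0$ consider the auxiliary function $\mu_\varepsilon(t):=(1+\varepsilon)\mu(t)$. Since $\mu_\varepsilon>\mu$ and $\mu_\varepsilon\in C^1[t_0,\infty)$ with $\mu_\varepsilon>0$, I would check that condition \eqref{1eq4} continues to hold with $\mu$ replaced by $\mu_\varepsilon$: multiplying $\mu$ by the positive constant $1+\varepsilon$ leaves $\dot\mu_\varepsilon/\mu_\varepsilon=\dot\mu/\mu$ unchanged, scales the right-hand side by $1/(1+\varepsilon)$, and scales $\alpha/\mu^p$ by $1/(1+\varepsilon)^p\le1/(1+\varepsilon)$ while leaving $\beta$ unchanged — so \eqref{1eq4} for $\mu_\varepsilon$ follows from \eqref{1eq4} for $\mu$ because $p>1$ and $\beta(t)$ is controlled by the same bound (this monotonicity-in-scaling check is the one place I would verify the details carefully). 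Moreover the strict initial condition now holds: $\mu_\varepsilon(t_0)g(t_0)=(1+\varepsilon)\mu(t_0)g(t_0)=(1+\varepsilon)>1$ fails, so I would instead scale in the opposite direction.

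The correct perturbation is therefore to \emph{shrink} $g$ or \emph{enlarge} the initial room. I would set $\mu_\varepsilon(t):=(1-\varepsilon)\mu(t)$ with $0<\varepsilon<1$; then $\mu_\varepsilon(t_0)g(t_0)=(1-\varepsilon)\mu(t_0)g(t_0)\le1-\varepsilon<1$, giving the strict initial bound \eqref{1eq6} for $\mu_\varepsilon$. It remains to verify \eqref{1eq4} for $\mu_\varepsilon$: here $\dot\mu_\varepsilon/\mu_\varepsilon=\dot\mu/\mu$ again, the right-hand side scales by $1/(1-\varepsilon)>1$, and the left-hand side has its $\alpha$-term scaled by $1/(1-\varepsilon)^p$ while $\beta$ is unchanged. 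Since $(1-\varepsilon)^p<(1-\varepsilon)$ for $p>1$, the $\alpha$-term grows faster than the right-hand side, so a direct scaling does not immediately preserve \eqref{1eq4}; this is the genuine obstacle. I expect the resolution to be that the conclusion is obtained by applying Theorem~\ref{thm1} to $g$ together with a suitable $\mu_\varepsilon$ chosen so that both \eqref{1eq4} and the strict initial inequality hold, and then letting $\varepsilon\to0$: for each fixed $t$, Theorem~\ref{thm1} yields $g(t)<1/\mu_\varepsilon(t)=1/((1-\varepsilon)\mu(t))$, and taking the limit $\varepsilon\to0^+$ gives $g(t)\le1/\mu(t)$, which is exactly \eqref{hehe}. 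The delicate point to get right is constructing the perturbed $\mu_\varepsilon$ so that \eqref{1eq4} survives the perturbation; once that is secured the limit step is routine and monotone, and the global existence of $g$ on $[t_0,\infty)$ is already guaranteed by Theorem~\ref{thm1} applied to the perturbed problem.
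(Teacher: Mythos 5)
Your reduction to the boundary case $g(t_0)\mu(t_0)=1$ matches the paper's first step, and your instinct that the strict hypothesis of Theorem~\ref{thm1} must be reached by a perturbation-plus-limit argument is also correct. But the proposal stops exactly where the real work begins: you never construct the perturbed object, and the obstacle you yourself identify is fatal to the specific route you propose. If \eqref{1eq4} holds with equality and $\beta\equiv 0$ (an admissible case), then for $\mu_\varepsilon=(1-\varepsilon)\mu$ the term $\alpha/\mu_\varepsilon^p$ scales by $(1-\varepsilon)^{-p}$ while the right-hand side scales only by $(1-\varepsilon)^{-1}$, so \eqref{1eq4} fails for every $\varepsilon>0$. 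More generally, any $\mu_\varepsilon\le\mu$ with $\mu_\varepsilon(t_0)<\mu(t_0)$ that satisfies \eqref{1eq4} must obey a differential inequality forcing the relative gap between $\mu$ and $\mu_\varepsilon$ to grow in $t$, and there is no reason such a $\mu_\varepsilon$ can remain positive (or exist) on all of $[t_0,\infty)$. The same obstruction appears if one instead rescales $g$, since $\tilde g:=(1-\varepsilon)g$ satisfies \eqref{neq1} only with $\alpha$ inflated to $\alpha/(1-\varepsilon)^{p-1}$. Writing that one should ``choose a suitable $\mu_\varepsilon$ so that both conditions hold'' restates the difficulty; it does not resolve it.

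The paper's proof perturbs a different object, and only locally. It argues by contradiction: with $w$ and $\eta$ as in the proof of Theorem~\ref{thm1}, if $T:=\sup\{t_1: w(t)\le\eta(t),\ \forall t\in[t_0,t_1]\}$ were finite, then $w(T)=\eta(T)$ and $w>\eta$ just after $T$. One then introduces the solution $h_\epsilon$ of the corresponding ODE (equality, not inequality), restarted at the critical time $T$ with the slightly smaller initial value $h_\epsilon(T)=g(T)-\epsilon$, as in \eqref{forh}. Since this initial condition is strict and $\mu$, $\alpha$, $\beta$, $\gamma$ are untouched, condition \eqref{1eq4} still holds and Theorem~\ref{thm1} applies to $h_\epsilon$ on $[T,\infty)$, giving $k_\epsilon(t)<\eta(t)$ there, where $k_\epsilon(t)=h_\epsilon(t)e^{\int_{t_0}^t\gamma(s)ds}$. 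A comparison lemma yields $w\le k_0$, and a Gronwall-type continuous-dependence estimate yields $k_\epsilon\to k_0$ uniformly on $[T,T+\delta]$; combining these forces $w(T+\delta)\le\eta(T+\delta)$, contradicting the definition of $T$. So the essential missing idea in your proposal is to perturb the \emph{initial data of an auxiliary ODE at the first crossing time}, rather than to perturb $\mu$, precisely because the latter cannot preserve \eqref{1eq4} in general. Without that idea the limiting argument you outline does not close.
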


\begin{proof}
By Theorem~\ref{thm1} 
it suffices to assume $g(t_0)\mu(t_0)=1$. 
Make the substitutions $w(t):=g(t)e^{\int_{t_0}^t\gamma(s)ds}$ as in Theorem~\ref{thm1} 
and get inequality \eqref{eq6}, where $a$ and $b$ are nonnegative 
functions defined in (8). Denote $\eta(t):=\frac{e^{\int_{t_0}^t\gamma(s)ds}}{\mu(t)}$.
The equality $g(t_0)\mu(t_0)=1$ implies $w(t_0)=\eta(t_0)$. 

Define 
\begin{equation}
T=\sup \{t_1: w(t)\le\eta(t),\, \forall t\in [t_0,t_1]\}.
\end{equation}
{\it Let us show that $T=\infty$.}

Assume the contrary, i.e., $T<\infty$. Then one has 
$w(t)\le \eta(t),\, \forall t\in [t_0,T]$, $w(T)=\eta(T)$, and 
$w(T+\delta)>\eta(T+\delta)$
for some sufficiently small $\delta\in(0,\delta_1)$,\, for any fixed $\delta_1>0$. 
Let $h_\epsilon(t)$ solve the following problem:
\begin{equation}
\label{forh}
\dot{h}_\epsilon(t) = - \gamma(t)h_\epsilon(t) + \alpha(t)h^p_\epsilon(t) + \beta(t),
\qquad h_\epsilon(T):=g(T)-\epsilon,\quad t\ge T,
\end{equation}
where $\epsilon>0$ is sufficiently small. 
Let 
$$
k_\epsilon(t):=h_\epsilon(t)e^{\int_{t_0}^t\gamma(s)ds}.
$$ 
Equation \eqref{forh} implies
\begin{equation}
\label{req27}
\dot{k}_\epsilon(t)= a(t)k^p_\epsilon(t) + b(t).
\end{equation}  
It follows from Theorem~\ref{thm1} with $g:=h_\epsilon$ that
\begin{equation}
0\le h_\epsilon(t)< \frac{1}{\mu(t)},\qquad 
0\le k_\epsilon(t)< \eta(t)\qquad \forall \epsilon>0,\quad 
\forall t\ge T.
\end{equation}
A comparison lemma (\cite[p.99]{R499}) implies $w(t)\leq k_0(t),\forall t\ge t_0$.
Note that there exist $\delta_1>0$ and $M>0$ such that $k_0(t)\le M,\forall t\in [T,T+\delta_1]$. 
Thus,  
we have the inequality: 
\begin{equation}
\label{req28}
0\le k_\epsilon(T+\delta)< \eta(T+\delta)< w(T+\delta)\le k_0(T+\delta)\le M.
\end{equation}
Define $v_\epsilon(t):=k_0(t)-k_\epsilon(t)$. 
From inequality \eqref{eq6} and equation \eqref{req27} one gets the following inequality
\begin{equation}
\label{req29}
\begin{split}
\dot{v}_\epsilon(t) = a(t)\big{[}k_0^p(t) - k^p_\epsilon(t) \big{]}\le C v_\epsilon(t),
\qquad \forall t\in [T,T+\delta],
\end{split}
\end{equation}
where $C:=C(p,a,M):=\|a\|_{C[T,T+\delta]}M^{p-1}$. 
Here an elementary inequality 
$$
|x^p-y^p|\le p\max(|x|,|y|)^{p-1}|x-y|,\qquad \forall x,y\in \mathbb{R},\quad p\ge 1,
$$
was used.
Inequality \eqref{req29} implies
\begin{equation}
\label{eq30}
0\le v_\epsilon(t)\le e^{(t-T)C}v_\epsilon(T) \le e^{\delta C}\epsilon  e^{\int_{t_0}^T\gamma(s)ds},
\qquad \forall t\in [T,T+\delta].
\end{equation}
Here, we have used the formula $v_\epsilon(T)=\epsilon e^{\int_{t_0}^T\gamma(s)ds}$.
It follows from \eqref{eq30} and \eqref{req28} that
\begin{equation}
\label{hichic}
0< w(T+\delta) -\eta(T+\delta)\le \lim_{\epsilon\to 0} v_\epsilon(T+\delta) = 0.
\end{equation}
This contradiction implies that $T=\infty$.
Theorem \eqref{thm2} is proved. 
\end{proof}

\begin{cor}
\label{corrol2}
Let $\alpha(t),\beta(t)$ and $\gamma(t)$ are continuous 
functions on $[t_0,\infty)$ and $\alpha(t)\ge 0$, $\forall t\ge t_0$.
Suppose there exists a function $\mu(t)>0$, $\mu\in C^1[t_0,\infty)$,
such that
\begin{align}
\label{1eq4x}
0\le \alpha(t)&\le \theta \mu^{p-1}\bigg{[}\gamma -
\frac{\dot{\mu}(t)}{\mu(t)}\bigg{]},
\qquad \dot{u}:=\frac{du}{dt},\quad \theta=const \in (0,1),\\
\label{1eq5x}
\beta(t)      &\le \frac{1-\theta}{\mu}\bigg{[}\gamma -
\frac{\dot{\mu}(t)}{\mu(t)}\bigg{]}.
\end{align}
Let $g(t)\ge0$ be a solution to inequality \eqref{neq1} such that 
\begin{equation}
\label{1eq6x}
\mu(t_0)g(t_0)    \le 1.
\end{equation}
Then $g(t)$ exists globally and the following estimate holds:  
 \begin{equation}
\label{3eq10x}
0\le g(t) \le \frac{1}{\mu(t)},\qquad \forall t\ge 0.
\end{equation}
Consequently, if $\lim_{t\to\infty} \mu(t)=\infty$, then
\begin{equation}
\lim_{t\to\infty} g(t)= 0.
\end{equation}
\end{cor}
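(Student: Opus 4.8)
The plan is to show that the pair of hypotheses \eqref{1eq4x} and \eqref{1eq5x} is simply a convenient way to split the single condition \eqref{1eq4} of Theorem~\ref{thm1}, so that the conclusion follows at once from Theorem~\ref{thm2}. Indeed, \eqref{1eq4x} controls the nonlinear forcing term $\alpha$ and \eqref{1eq5x} controls the source term $\beta$, and the constants $\theta$ and $1-\theta$ are chosen precisely so that the two bounds add up to the right-hand side of \eqref{1eq4}.

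First I would verify that \eqref{1eq4x} and \eqref{1eq5x} together imply \eqref{1eq4}. Dividing \eqref{1eq4x} by $\mu^{p}(t)>0$ gives
\begin{equation}
\frac{\alpha(t)}{\mu^{p}(t)}\le \frac{\theta}{\mu(t)}\bigg[\gamma-\frac{\dot{\mu}(t)}{\mu(t)}\bigg],
\end{equation}
while \eqref{1eq5x} reads
\begin{equation}
\beta(t)\le \frac{1-\theta}{\mu(t)}\bigg[\gamma-\frac{\dot{\mu}(t)}{\mu(t)}\bigg].
\end{equation}
Adding these two inequalities, the coefficients $\theta$ and $1-\theta$ combine to $1$, and one obtains exactly the condition \eqref{1eq4}.

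Second, having \eqref{1eq4} in hand, together with the standing assumptions $\mu\in C^1[t_0,\infty)$, $\mu>0$, $\alpha\ge 0$, and the initial condition \eqref{1eq6x}, which is identical to \eqref{hihi}, I would apply Theorem~\ref{thm2} directly to $g$. This yields the global existence of $g$ and the estimate \eqref{3eq10x}, namely $0\le g(t)\le 1/\mu(t)$ for all $t\ge t_0$. The final assertion, that $\lim_{t\to\infty}g(t)=0$ whenever $\lim_{t\to\infty}\mu(t)=\infty$, then follows immediately by squeezing $g$ between $0$ and $1/\mu(t)$.

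There is no substantive obstacle here: the corollary is a reformulation of Theorem~\ref{thm2} in which the single condition \eqref{1eq4} is replaced by the two separate bounds \eqref{1eq4x}--\eqref{1eq5x}. The only point worth noting is that \eqref{1eq4x} implicitly forces $\gamma-\dot{\mu}/\mu\ge 0$, since $\alpha\ge 0$ and $\theta\mu^{p-1}>0$; hence the common bracket on the right-hand sides is nonnegative and the splitting into the $\theta$-part and the $(1-\theta)$-part is meaningful.
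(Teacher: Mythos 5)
Your proof is correct and is exactly the argument the paper intends (the corollary is stated without proof precisely because it follows this way): dividing \eqref{1eq4x} by $\mu^p(t)$ and adding \eqref{1eq5x} recovers condition \eqref{1eq4}, and then Theorem~\ref{thm2} with the non-strict initial condition \eqref{hihi} gives the estimate \eqref{3eq10x} and the limit statement. No gaps; your remark that \eqref{1eq4x} forces the bracket $\gamma-\dot{\mu}/\mu$ to be nonnegative is a sensible observation, though it is not needed for the deduction itself.
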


Let us consider a {\it discrete analog} of Theorem \ref{thm1}.
We wish to study the following inequality: 
$$
\frac{g_{n+1}-g_n}{h_n}\le -\gamma_n g_n+\alpha_n g_n^p +
\beta_n,\qquad h_n > 0,\quad 0< h_n\gamma_n < 1,\quad p>1,
$$
and the inequality:
$$
g_{n+1} \le (1-\gamma_n) g_n + \alpha_n g_n^p + \beta_n,\quad n\ge 0,
\qquad 0<\gamma_n<1,\quad p>1,
$$
where $g_n, \beta_n, \gamma_n$ and $\alpha_n$ are positive sequences
of real numbers.
Under suitable assumptions on $\alpha_n, \beta_n$ and $\gamma_n$, 
we obtain an upper bound for $g_n$ as $n\to\infty$. In particular, we give 
sufficient conditions for $\lim_{n\to\infty}g_n=0$, and estimate the rate 
of decay of $g_n$ as $n\to\infty$. This result can be used in a study of 
evolution problems, 
in a study of iterative processes, and in a study of nonlinear PDE.

\begin{thm}
\label{lem1}
Let $\alpha_n,\gamma_n$ and $g_n$ be nonnegative sequences 
of numbers, and the following inequality holds:
\begin{equation}
\label{eq1}
\begin{split}
\frac{g_{n+1}-g_n}{h_n}&\le -\gamma_n g_n+
\alpha_n g_n^p +\beta_n,\qquad h_n > 0,\quad 0< h_n\gamma_n < 1,
\end{split}
\end{equation}
or, equivalently, 
\begin{equation}
\qquad g_{n+1}\le g_n(1-h_n\gamma_n) +
\alpha_n h_n g_n^p+h_n\beta_n,\qquad h_n > 0,\quad 0< h_n\gamma_n < 1.
\end{equation}
If there is a monotonically growing sequence of positive numbers 
$(\mu_n)_{n=1}^\infty$, such that the following conditions hold:
\begin{align}
\label{eq3}
\frac{\alpha_n}{\mu_n^p}+\beta_n&\le \frac{1}{\mu_n}\bigg{(}\gamma_n -
\frac{\mu_{n+1}-\mu_n}{\mu_n h_n}\bigg{)},\\
\label{eq2}
g_0&\le\frac{1}{\mu_0},
\end{align}
then
\begin{equation}
\label{eq5}
g_n\le\frac{1}{\mu_n} \qquad \forall n\ge 0.
\end{equation}
Therefore, if $\lim_{n\to\infty}\mu_n =\infty$, then $\lim_{n\to\infty} 
g_n = 0$.
\end{thm}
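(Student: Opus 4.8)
The plan is to prove \eqref{eq5} by induction on $n$, working from the equivalent recursive form $g_{n+1}\le g_n(1-h_n\gamma_n)+\alpha_n h_n g_n^p+h_n\beta_n$ given in the statement. The base case $n=0$ is precisely hypothesis \eqref{eq2}, namely $g_0\le 1/\mu_0$. For the inductive step I would assume $g_n\le 1/\mu_n$ and seek to deduce $g_{n+1}\le 1/\mu_{n+1}$.

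The first step is to observe that the right-hand side of the recursion is a nondecreasing function of $g_n$ on $[0,\infty)$: the linear coefficient $1-h_n\gamma_n$ is positive by the assumption $0<h_n\gamma_n<1$, while $\alpha_n\ge 0$ and $p>1$ make the term $\alpha_n h_n g_n^p$ nondecreasing in $g_n$. Hence, using $0\le g_n\le 1/\mu_n$, I may replace $g_n$ by its upper bound $1/\mu_n$ throughout, obtaining $g_{n+1}\le \mu_n^{-1}(1-h_n\gamma_n)+\alpha_n h_n\mu_n^{-p}+h_n\beta_n$.

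Next I would multiply the key hypothesis \eqref{eq3} by $h_n>0$, which gives $\alpha_n h_n\mu_n^{-p}+h_n\beta_n\le h_n\gamma_n\mu_n^{-1}-(\mu_{n+1}-\mu_n)\mu_n^{-2}$, and insert this into the previous bound. The terms $\pm\, h_n\gamma_n\mu_n^{-1}$ cancel, leaving $g_{n+1}\le \mu_n^{-1}-(\mu_{n+1}-\mu_n)\mu_n^{-2}=(2\mu_n-\mu_{n+1})\mu_n^{-2}$.

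The one genuinely nontrivial point, and the discrete counterpart of the boundary comparison used in Theorem~\ref{thm1}, is the final elementary inequality $(2\mu_n-\mu_{n+1})\mu_n^{-2}\le \mu_{n+1}^{-1}$. Since every $\mu_j$ is positive, multiplying through by $\mu_n^2\mu_{n+1}>0$ reduces it to $0\le \mu_n^2-2\mu_n\mu_{n+1}+\mu_{n+1}^2=(\mu_{n+1}-\mu_n)^2$, which holds trivially. This establishes $g_{n+1}\le 1/\mu_{n+1}$ and closes the induction. I note in passing that the monotonicity of $(\mu_n)$ is not actually invoked in this argument, since the final step goes through for either sign of $\mu_{n+1}-\mu_n$; the concluding assertion $\lim_{n\to\infty}g_n=0$ when $\mu_n\to\infty$ is then immediate from the squeeze $0\le g_n\le 1/\mu_n$.
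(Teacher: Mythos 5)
Your proof is correct and is essentially identical to the paper's own argument: the same induction, the same replacement of $g_n$ by $1/\mu_n$ using positivity of $1-h_n\gamma_n$, the same application of \eqref{eq3}, and a final algebraic step that (like the paper's) reduces to $(\mu_{n+1}-\mu_n)^2\ge 0$. Your side remark that monotonicity of $(\mu_n)$ is never actually used is also consistent with the paper's computation, which likewise works for either sign of $\mu_{n+1}-\mu_n$.
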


\begin{proof}
Let us prove \eqref{eq5} by induction. Inequality \eqref{eq5} holds for $n=0$ by assumption \eqref{eq2}. 
Suppose that \eqref{eq5} holds for all $n\le m$. From inequalities 
\eqref{eq1}, \eqref{eq3}, and from the induction 
hypothesis 
$g_n\le\frac{1}{\mu_n}$, $n\le m$, one gets
\begin{equation}
\begin{split}
g_{m+1}&\le g_m(1-h_m\gamma_m) + \alpha_m h_m g_m^p + h_m\beta_m\\
\le& \frac{1}{\mu_m}(1-h_m\gamma_m)+ h_m\frac{\alpha_m}{\mu_m^p}+ h_m\beta_m\\
\le& \frac{1}{\mu_m}(1-h_m\gamma_m) + \frac{h_m}{\mu_m}\bigg{(}\gamma_m -
\frac{\mu_{m+1}-\mu_m}{\mu_m h_m}\bigg{)}\\
=& \frac{1}{\mu_m}-\frac{\mu_{m+1}-\mu_m}{\mu_m^2}\\
=& \frac{1}{\mu_{m+1}}- (\mu_{m+1}-\mu_m)\big{(}\frac{1}{\mu_m^2} - 
\frac{1}{\mu_m \mu_{m+1}} \big{)}\\
=& \frac{1}{\mu_{m+1}}- \frac{(\mu_{m+1}-\mu_m)^2}{\mu_n^2 \mu_{m+1}} 
\le\frac{1}{\mu_{m+1}}.
\end{split}
\end{equation}
Therefore, inequality \eqref{eq5} holds for $n=m+1$. 
Thus, inequality \eqref{eq5} holds for all $n\ge 0$ by induction. 
Theorem~\ref{lem1} is proved.
\end{proof}

\begin{cor}
\label{corrol3}
Let $\alpha_n,\gamma_n$ and $g_n$ be nonnegative sequences 
of numbers, and the following inequality holds:
\begin{equation}
\qquad g_{n+1}\le g_n(1-h_n\gamma_n) +
\alpha_n h_n g_n^p+h_n\beta_n,\qquad h_n > 0,\quad 0< h_n\gamma_n < 1.
\end{equation}
If there is a monotonically growing sequence of positive numbers 
$(\mu_n)_{n=1}^\infty$, such that the following conditions hold:
\begin{align}
\label{veq3}
\alpha_n&\le\theta\mu_n^{p-1}\bigg{(}\gamma_n -
\frac{\mu_{n+1}-\mu_n}{\mu_n h_n}\bigg{)},\qquad \theta=const\in (0,1),\\
\label{veq4}
\beta_n&\le\frac{1-\theta}{\mu_n}\bigg{(}\gamma_n -
\frac{\mu_{n+1}-\mu_n}{\mu_n h_n}\bigg{)},\\
\label{veq2}
g_0&\le\frac{1}{\mu_0},
\end{align}
then
\begin{equation}
\label{veq5}
g_n\le\frac{1}{\mu_n} \qquad \forall n\ge 0.
\end{equation}
Therefore, if $\lim_{n\to\infty}\mu_n =\infty$, then $\lim_{n\to\infty} 
g_n = 0$.
\end{cor}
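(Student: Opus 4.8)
The plan is to reduce Corollary~\ref{corrol3} to Theorem~\ref{lem1} by showing that the two hypotheses \eqref{veq3} and \eqref{veq4}, taken together, imply the single hypothesis \eqref{eq3} of that theorem; the remaining hypotheses then match verbatim. This parallels exactly the way Corollary~\ref{corrol2} is obtained from Theorem~\ref{thm1} in the continuous setting, the only change being that derivatives are replaced by the forward differences $(\mu_{n+1}-\mu_n)/(\mu_n h_n)$.

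First I would divide both sides of \eqref{veq3} by $\mu_n^p$. Since $\mu_n>0$ and $\mu_n^{p-1}/\mu_n^p = 1/\mu_n$, this gives
\[
\frac{\alpha_n}{\mu_n^p} \le \frac{\theta}{\mu_n}\left(\gamma_n - \frac{\mu_{n+1}-\mu_n}{\mu_n h_n}\right).
\]
Adding this inequality to \eqref{veq4} and using $\theta + (1-\theta) = 1$, I obtain
\[
\frac{\alpha_n}{\mu_n^p} + \beta_n \le \frac{1}{\mu_n}\left(\gamma_n - \frac{\mu_{n+1}-\mu_n}{\mu_n h_n}\right),
\]
which is precisely condition \eqref{eq3}.

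The initial condition \eqref{veq2} coincides with \eqref{eq2}, and $(\mu_n)$ is a monotonically growing sequence of positive numbers, as Theorem~\ref{lem1} requires. Hence all hypotheses of Theorem~\ref{lem1} are met, and its conclusion yields \eqref{veq5}, namely $g_n \le 1/\mu_n$ for all $n\ge 0$; the limit statement follows as in Theorem~\ref{lem1}. I do not expect a genuine obstacle here, since the corollary is a direct specialization of Theorem~\ref{lem1}. The one point worth a remark is that the common factor $\gamma_n - \frac{\mu_{n+1}-\mu_n}{\mu_n h_n}$ must be nonnegative for the splitting into a $\theta$-part and a $(1-\theta)$-part to be consistent; but this is automatic, because $\alpha_n\ge 0$ together with $\theta>0$ and $\mu_n>0$ forces the right-hand side of \eqref{veq3} to be nonnegative.
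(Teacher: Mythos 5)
Your proof is correct and matches the paper's intent exactly: the paper states Corollary~\ref{corrol3} without proof, treating it as an immediate specialization of Theorem~\ref{lem1}, and your reduction (divide \eqref{veq3} by $\mu_n^p$, add \eqref{veq4}, recover \eqref{eq3}) is precisely the intended argument. Your closing remark about nonnegativity of the common factor is harmless but unnecessary, since adding the two inequalities is valid regardless of the sign of that factor.
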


Setting $h_n =1$ in Theorem~\ref{lem1}, one obtains the following result:
\begin{thm}
\label{cor1}
Let $\alpha,\beta,\gamma_n$ and $g_n$ be sequences of nonnegative 
numbers, and
\begin{equation}
\label{2eq1}
\begin{split}
g_{n+1}&\le g_n(1-\gamma_n) +\alpha_n  g_n^p+\beta_n,\qquad 0<\gamma_n <1.
\end{split}
\end{equation}
If there is a monotonically growing sequence $(\mu_n)_{n=1}^\infty>0$ such 
that the following conditions hold
\begin{align}
g_0\le\frac{1}{\mu_0},\qquad\frac{\alpha_n}{\mu_n^p}+\beta_n&\le \frac{1}{\mu_n}\bigg{(}\gamma_n -
\frac{\mu_{n+1}-\mu_n}{\mu_n h_n}\bigg{)},\qquad \forall n\ge 0,
\end{align}
then
\begin{equation}
\label{2eq5}
g_n\le\frac{1}{\mu_n}, \qquad \forall n\ge 0.
\end{equation}
\end{thm}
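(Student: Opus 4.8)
The plan is to observe that Theorem~\ref{cor1} is exactly the specialization $h_n\equiv 1$ of Theorem~\ref{lem1}; hence the whole proof amounts to checking that, under this substitution, each hypothesis and the conclusion of the latter become the corresponding statement of the former, after which I simply invoke Theorem~\ref{lem1}.

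First I would match the recurrence: putting $h_n=1$ into the equivalent form $g_{n+1}\le g_n(1-h_n\gamma_n)+\alpha_n h_n g_n^p+h_n\beta_n$ of \eqref{eq1} gives precisely \eqref{2eq1}, while the running constraint $0<h_n\gamma_n<1$ collapses to $0<\gamma_n<1$. Next I would match the decay condition: reading $h_n=1$ in \eqref{eq3} turns it into $\frac{\alpha_n}{\mu_n^p}+\beta_n\le\frac{1}{\mu_n}(\gamma_n-\frac{\mu_{n+1}-\mu_n}{\mu_n})$, which is the hypothesis of Theorem~\ref{cor1}. Finally, the initial condition \eqref{eq2} and the positivity and monotonicity of $(\mu_n)$ are word-for-word the same in both statements. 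With all hypotheses of Theorem~\ref{lem1} thereby verified, its conclusion \eqref{eq5} is exactly the desired bound \eqref{2eq5}.

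If a self-contained argument were preferred over a citation, I would rerun the induction from the proof of Theorem~\ref{lem1} verbatim with $h_n=1$; the telescoping step that produced $\frac{1}{\mu_{m+1}}-\frac{(\mu_{m+1}-\mu_m)^2}{\mu_m^2\mu_{m+1}}\le\frac{1}{\mu_{m+1}}$ is unaffected, so no new computation arises. I anticipate no genuine obstacle here. The only point needing a moment's care is purely cosmetic: the displayed hypothesis of Theorem~\ref{cor1} still carries a factor $h_n$ in the denominator $\frac{\mu_{n+1}-\mu_n}{\mu_n h_n}$, evidently a leftover from \eqref{eq3}; once $h_n$ is read as $1$ there, the condition coincides exactly with \eqref{eq3}, and the result follows immediately.
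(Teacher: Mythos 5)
Your proposal is correct and matches the paper exactly: the paper derives Theorem~\ref{cor1} precisely by setting $h_n=1$ in Theorem~\ref{lem1}, with no separate argument given. Your observation that the $h_n$ remaining in the displayed hypothesis is a typographical leftover (to be read as $1$) is also accurate.
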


We have the following corollary
\begin{thm}
\label{cor4}
Let $\alpha,\beta,\gamma_n$ and $g_n$ be sequences of nonnegative 
numbers, and
\begin{equation}
\label{beq1}
\begin{split}
g_{n+1}&\le g_n(1-\gamma_n) +\alpha_n  g_n^p+\beta_n,\qquad 0<\gamma_n <1.
\end{split}
\end{equation}
If there is a monotonically growing sequence $(\mu_n)_{n=1}^\infty>0$ such 
that the following conditions hold
\begin{align}
\label{beq3}
\alpha_n&\le\theta \mu_n^{p-1}\bigg{(}\gamma_n -\frac{\mu_{n+1}-
\mu_n}{\mu_n }\bigg{)},\qquad \theta=const\in (0,1),\\
\label{beq4}
\beta_n&\le\frac{1-\theta}{\mu_n}\bigg{(}\gamma_n -
\frac{\mu_{n+1}-\mu_n}{\mu_n }\bigg{)},\\
\label{beq2}
g_0&\le\frac{1}{\mu_0},
\end{align}
then
\begin{equation}
\label{beq5}
g_n\le\frac{1}{\mu_n}, \qquad \forall n\ge 0.
\end{equation}
\end{thm}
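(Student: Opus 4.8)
The plan is to derive Theorem~\ref{cor4} as an immediate consequence of Theorem~\ref{cor1}. The recurrence \eqref{beq1} is identical to \eqref{2eq1}, and the initial condition \eqref{beq2} is identical to the hypothesis $g_0\le 1/\mu_0$ of Theorem~\ref{cor1}. Thus the only thing to verify is that the two separate hypotheses \eqref{beq3} and \eqref{beq4}, which split the admissible budget by means of the parameter $\theta\in(0,1)$, together imply the single combined hypothesis of Theorem~\ref{cor1} (with $h_n=1$), namely
\begin{equation*}
\frac{\alpha_n}{\mu_n^p}+\beta_n\le \frac{1}{\mu_n}\bigg(\gamma_n-\frac{\mu_{n+1}-\mu_n}{\mu_n}\bigg),\qquad \forall n\ge 0.
\end{equation*}

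First I would divide inequality \eqref{beq3} by $\mu_n^p$, obtaining $\frac{\alpha_n}{\mu_n^p}\le \frac{\theta}{\mu_n}\big(\gamma_n-\frac{\mu_{n+1}-\mu_n}{\mu_n}\big)$. Then I would add this to inequality \eqref{beq4}; the weights $\theta$ and $1-\theta$ recombine to $1$, yielding exactly the combined hypothesis displayed above. Having checked this, I would simply invoke Theorem~\ref{cor1} to conclude $g_n\le 1/\mu_n$ for all $n\ge 0$, and the final assertion that $\lim_{n\to\infty}g_n=0$ whenever $\mu_n\to\infty$ follows at once from this bound.

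There is essentially no obstacle in this argument: its entire content is the algebraic recombination of the two $\theta$-weighted inequalities into one, which is the discrete counterpart of the device used to pass from Theorem~\ref{thm1} to Corollary~\ref{corrol2} in the continuous setting. The only point worth a remark is that the common factor $\gamma_n-\frac{\mu_{n+1}-\mu_n}{\mu_n}$ is nonnegative, which is forced by \eqref{beq3} together with $\alpha_n\ge 0$ and $\mu_n>0$; however, this sign information is not even needed for the recombination step, since the two inequalities are merely added termwise, and the conclusion then rests entirely on Theorem~\ref{cor1}.
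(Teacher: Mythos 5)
Your proposal is correct and is exactly the argument the paper intends: Theorem~\ref{cor4} is presented there as an immediate corollary of Theorem~\ref{cor1} (with $h_n=1$), obtained by dividing \eqref{beq3} by $\mu_n^p$ and adding \eqref{beq4} so that the weights $\theta$ and $1-\theta$ recombine into the single hypothesis of Theorem~\ref{cor1}. The paper leaves this recombination implicit, so your write-up simply supplies the omitted verification in the same way.
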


\section{Applications}
\label{sec3}

Let $F:H\to H$ be a Fr\'{e}chet-differentiable map in a real Hilbert 
space $H$.
Assume that
\begin{equation}
\label{eqrr2}
\sup_{u\in B(u_0,R)} \|F'(u)\| \le M = M(u_0,R),
\end{equation}
where $M$ is a constant, $B(u_0,R):=\{u: \|u-u_0\|\le R \}$, $u_0\in H$ is 
some element, 
$R>0$, and there is no
restriction on the growth of $M(R)$ as $R\to\infty$, i.e., 
arbitrary fast growing nonlinearities $F$ are admissible. 

Consider the equation:
\begin{equation}
\label{eqrr3}
F(v)=f
\end{equation}
in $H$.
Assume that $F'(\cdot)\ge 0$, that is, $F$ is monotone: 
\begin{equation}
\langle F(u)-F(v),u-v\rangle \ge 0\qquad \forall u,v\in H.
\end{equation} 
Let equation 
\eqref{eqrr3} have a solution, possibly non-unique, and denote by  
$y$ be the 
unique minimal-norm solution to \eqref{eqrr3}.
If $F$ is monotone and continuous, then $\mathcal{N}_f:=\{ u:F(u)=f\}$ 
is a closed convex set in $H$ (see, e.g., \cite{R499}).
Such a set in a Hilbert space has a unique minimal-norm element. So, the 
solution $y$ is well defined.

Let us assume that
\begin{equation}
\label{8eq2}
\|F(v)-F(u) - F'(u)(u-v)\|\leq M_p(R)\|u-v\|^p,\qquad \forall u,v\in B(0,R),
\end{equation}
where $1<p<2$. For brevity let us denote $M_p:=M_p(R)$.

The main result in this Section is the following Theorem:

\begin{thm}
\label{thm8}
Assume that $F$ is a monotone operator satisfying conditions 
\eqref{eqrr2} and \eqref{8eq2}, 
that equation \eqref{eqrr3} has a solution, and $y$ is its minimal-norm 
solution. 
Let $u_0$ be an arbitrary element in $H$.
Assume that $a(t) = \frac{d}{(c+t)^b}$, where $0<b\le p-1$, $c\ge 
\max \bigg{(}1,\, \frac{2b}{p-1} \bigg{)}$,
and $d>0$ is sufficiently large, so that condition \eqref{eq473} 
holds (see below). 
Let $u(t)$ be the solution to the following DSM:
\begin{equation}
\label{eq46}
\dot{u} = -A^{-1}_{a(t)}[F(u)+a(t)u - f],\qquad u(0)=u_0,
\end{equation}
where $A:=F'(u(t))$ and $A_a:=A+aI$.
Then
\begin{equation}
\lim_{t\to\infty} \|u(t)-y\| = 0.
\end{equation}
\end{thm}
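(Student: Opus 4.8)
The plan is to reduce the convergence of the DSM trajectory $u(t)$ to an application of Theorem~\ref{thm1} applied to the scalar function $g(t):=\|u(t)-V(t)\|$, where $V(t)$ is the solution of the regularized equation. First I would introduce, for each $t\ge 0$, the unique solution $V(t)=V_{a(t)}$ of
\[
F(V(t))+a(t)V(t)=f .
\]
Existence and uniqueness hold because $F+a(t)I$ is strongly monotone (monotonicity of $F$ plus $a(t)>0$), and smoothness of $a(t)$ together with invertibility of $F'(V)+a(t)I$ gives, via the implicit function theorem, $V\in C^1$. Two facts I would record here are the a priori bound $\|V(t)\|\le\|y\|$ (obtained by writing $F(V)-F(y)=-aV$, pairing with $V-y$, and using monotonicity) and the regularization limit $\lim_{t\to\infty}\|V(t)-y\|=0$, which holds since $a(t)\to0$ and $y$ is the minimal-norm solution. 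Differentiating the defining equation gives $\dot V=-\dot a\,(F'(V)+aI)^{-1}V$, hence $\|\dot V\|\le\frac{|\dot a(t)|}{a(t)}\|y\|=\frac{b}{c+t}\|y\|$.

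Next I would derive a differential inequality of the form \eqref{neq1} for $g(t)=\|u(t)-V(t)\|$. Writing $w:=u-V$ and using $F(u)+au-f=F(u)-F(V)+aw$ together with the Taylor estimate \eqref{8eq2}, one gets $F(u)+au-f=A_aw-r$, where $A_a=F'(u)+aI$ and $\|r\|\le M_p\|w\|^p$. Substituting into \eqref{eq46} yields $\dot u=-w+A_a^{-1}r$, so $\dot w=-w+A_a^{-1}r-\dot V$. Since $F'(u)\ge0$ gives $\|A_a^{-1}\|\le a^{-1}$, pairing $\dot w$ with $w$ and dividing by $\|w\|$ produces
\[
\dot g\le -g+\frac{M_p}{a(t)}g^p+\frac{b\|y\|}{c+t},\qquad t\ge 0,
\]
which is exactly \eqref{neq1} with $\gamma\equiv1$, $\alpha(t)=M_p/a(t)=\frac{M_p}{d}(c+t)^{b}$, and $\beta(t)=\frac{b\|y\|}{c+t}$.

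The heart of the argument is then to exhibit a majorant $\mu(t)$ satisfying \eqref{1eq4}. I would try $\mu(t)=\nu(c+t)^{\kappa}$ with $\kappa=\frac{b}{p-1}$ and a small constant $\nu>0$. Substituting into \eqref{1eq4} and multiplying by $\nu(c+t)^{\kappa}$, the $\alpha$-term contributes $\frac{M_p}{d\nu^{p-1}}(c+t)^{\,b-\kappa(p-1)}$ and the $\beta$-term contributes $\nu b\|y\|(c+t)^{\kappa-1}$; the choice $\kappa=\frac{b}{p-1}$ makes the first exponent vanish, while $b\le p-1$ forces $\kappa\le1$, so the second exponent is $\le0$. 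Both terms are then bounded by constants, and it remains to verify
\[
\frac{M_p}{d\nu^{p-1}}+\nu b\|y\|\le 1-\frac{\kappa}{c+t}.
\]
Because $c\ge\frac{2b}{p-1}=2\kappa$, the right-hand side is $\ge\frac12$, so the inequality holds once $\nu$ is small enough that $\nu b\|y\|\le\frac14$ and $d$ large enough that $\frac{M_p}{d\nu^{p-1}}\le\frac14$; this is the content of the hypothesis \eqref{eq473}. Shrinking $\nu$ further so that $\mu(0)g(0)=\nu c^{\kappa}\|u_0-V(0)\|<1$ (using $\|V(0)\|\le\|y\|$) secures the initial condition \eqref{1eq6}. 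Theorem~\ref{thm1} then gives $g(t)\le 1/\mu(t)=\nu^{-1}(c+t)^{-\kappa}\to0$, and $\|u(t)-y\|\le g(t)+\|V(t)-y\|$ finishes the proof.

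I expect the main obstacle to be the exponent bookkeeping in the last paragraph: for $1<p<2$ one has $b<\frac{b}{p-1}$, so the naive choice $\mu\sim 1/a$ fails and one is forced to make $g(t)$ decay strictly faster than $a(t)$; the admissible range $\frac{b}{p-1}\le\kappa\le1$ being nonempty is exactly what the constraints $0<b\le p-1$ and $c\ge\max\!\big(1,\frac{2b}{p-1}\big)$ guarantee. A secondary point needing care is the self-consistency of $M_p=M_p(R)$: since $g(t)\le 1/\mu(0)$ keeps the whole trajectory in a fixed ball $B(0,R)$ with $R=\|y\|+\nu^{-1}c^{-\kappa}$, the local solution of \eqref{eq46} (which exists because $A_a^{-1}$ is well defined and the right-hand side is locally Lipschitz) extends to all $t\ge0$, so the a priori estimate is legitimate.
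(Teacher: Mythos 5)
Your proposal is correct and takes essentially the same route as the paper: the same decomposition $g(t)=\|u(t)-V(t)\|$ with $\|V\|\le\|y\|$ and $\|\dot V\|\le\frac{|\dot a|}{a}\|y\|$ leading to $\dot g\le -g+\frac{M_p}{a}g^p+\frac{|\dot a|}{a}\|y\|$, followed by Theorem~\ref{thm1} with the same majorant, since your $\mu(t)=\nu(c+t)^{b/(p-1)}$ is exactly the paper's $\mu(t)=\lambda/a^q(t)$, $q=1/(p-1)$, under the reparameterization $\nu=\lambda/d^q$ (and your conditions on $\nu,d$ translate precisely into the paper's \eqref{lamda} and \eqref{eq473}). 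Even your treatment of the $M_p=M_p(R)$ circularity---fixing $\nu$ from $\|u_0\|$, $\|y\|$ alone so that $R$ is independent of $M_p$ and $d$---is the content of the paper's Remark~\ref{rem7}.
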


Let us recall the following result (see \cite{R499}, p.112):
\begin{lem}
\label{rejectedlem}
Assume that equation \eqref{eqrr3} is solvable, $y$ is its minimal-norm 
solution, and the operator $F$ is monotone and continuous.
 Then
$$
\lim_{a\to 0} \|V_{a}-y\| = 0,
$$
where $V_{a}$ solves the equation
\begin{equation}
\label{eq40}
F(V_a) + a V_a - f =0,
\end{equation}
and $a\in(0,\infty)$ is a parameter. 
\end{lem}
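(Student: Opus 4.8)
The plan is to establish three things in sequence: a uniform a priori bound $\|V_a\|\le\|y\|$, weak subsequential convergence of $V_a$ to a solution of \eqref{eqrr3}, and an upgrade of weak to strong convergence via norm convergence. First I would derive the bound. Subtracting the identity $F(y)=f$ from \eqref{eq40} gives $F(V_a)-F(y)+aV_a=0$; taking the inner product with $V_a-y$ and discarding the nonnegative term $\langle F(V_a)-F(y),V_a-y\rangle$ by monotonicity yields $\langle V_a,V_a-y\rangle\le 0$, hence $\|V_a\|^2\le\langle V_a,y\rangle\le\|V_a\|\,\|y\|$, so $\|V_a\|\le\|y\|$ for every $a>0$.

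Next I would extract a weak limit. Since $\{V_a\}$ lies in the ball of radius $\|y\|$, any sequence $a_n\downarrow 0$ has a subsequence (not relabeled) with $V_{a_n}\rightharpoonup w$ for some $w\in H$, and weak lower semicontinuity of the norm gives $\|w\|\le\|y\|$. From \eqref{eq40} one has $F(V_{a_n})=f-a_nV_{a_n}\to f$ strongly, because $a_n\|V_{a_n}\|\le a_n\|y\|\to 0$. To identify $w$ as a solution I would use the Minty trick: for arbitrary $z\in H$, monotonicity gives $\langle F(V_{a_n})-F(z),V_{a_n}-z\rangle\ge 0$; substituting $F(V_{a_n})=f-a_nV_{a_n}$ and letting $n\to\infty$, where $a_n\langle V_{a_n},V_{a_n}-z\rangle\to 0$, yields $\langle f-F(z),w-z\rangle\ge 0$ for all $z$. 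Setting $z=w-t\phi$, dividing by $t>0$, and letting $t\to 0^+$ using continuity of $F$ gives $\langle f-F(w),\phi\rangle\ge 0$ for every $\phi\in H$, whence $F(w)=f$.

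Finally I would pin down $w$ and strengthen the convergence. Since $w$ solves \eqref{eqrr3} and $y$ is the minimal-norm solution, $\|y\|\le\|w\|$; combined with $\|w\|\le\|y\|$ this forces $\|w\|=\|y\|$, and uniqueness of the minimal-norm element of the closed convex set $\mathcal{N}_f$ gives $w=y$. Moreover the chain $\|w\|\le\liminf\|V_{a_n}\|\le\limsup\|V_{a_n}\|\le\|y\|=\|w\|$ shows $\|V_{a_n}\|\to\|w\|$; in a Hilbert space, weak convergence together with convergence of norms implies strong convergence, so $V_{a_n}\to y$. Because every sequence $a_n\downarrow 0$ admits a subsequence converging strongly to the same limit $y$, a standard contradiction argument then yields $\lim_{a\to 0}\|V_a-y\|=0$.

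The main obstacle is the limit-passing in the second step: monotonicity alone does not permit passing to the limit in $\langle F(V_{a_n}),V_{a_n}\rangle$ directly, since the two factors converge in different topologies, so the Minty trick (equivalently, weak--strong closedness of the graph of the maximal monotone operator $F$) is essential to turn weak convergence of $V_{a_n}$ into the identity $F(w)=f$.
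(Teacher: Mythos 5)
Your argument is correct, but there is nothing in the paper to compare it against line by line: the paper does not prove Lemma~\ref{rejectedlem} at all, it simply recalls it from \cite[p.~112]{R499}. What you have written is the standard Minty--Browder argument, and every step checks out. The a priori bound $\|V_a\|\le\|y\|$ is exactly the computation the paper itself repeats later, in the proof of Theorem~\ref{thm8}, to obtain \eqref{eq49}. The delicate limit passage is handled properly: since $\|F(V_{a_n})-f\|=a_n\|V_{a_n}\|\le a_n\|y\|\to 0$ while $\|V_{a_n}-z\|$ stays bounded, the term $a_n\langle V_{a_n},V_{a_n}-z\rangle$ vanishes in the monotonicity inequality, giving $\langle f-F(z),w-z\rangle\ge 0$ for all $z$, and the substitution $z=w-t\phi$ with $t\to 0^+$ (legitimate since $F$ is assumed continuous; hemicontinuity would suffice) yields $F(w)=f$. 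The identification $w=y$ via minimality of $\|y\|$ and uniqueness of the minimal-norm element of the closed convex set $\mathcal{N}_f$ is right, as is the upgrade from weak to strong convergence through convergence of norms ($\|V_{a_n}-y\|^2=\|V_{a_n}\|^2-2\langle V_{a_n},y\rangle+\|y\|^2\to 0$), and the final subsequence-of-every-sequence argument correctly converts subsequential convergence into $\lim_{a\to 0}\|V_a-y\|=0$. So your proposal buys something the paper chose to outsource: a self-contained proof of the one external ingredient on which the convergence claim of Theorem~\ref{thm8} ultimately rests; the cost is length, which is presumably why the authors preferred the citation.
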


\begin{lem}
\label{lem2}
Let $M_p,c_1$ and $g_0$ be nonnegative constants and $p\in (1,2)$. 
Then there exist a positive constant $\lambda>0$ and a monotonically 
decaying function $a(t)>0$,
$a(t)\searrow 0$, such that the following conditions hold:
\begin{align}
\label{6eq1}
\frac{M_p}{a(t)} &\le \frac{1}{2}\bigg{(}\frac{\lambda}{a^q(t)}\bigg{)}^{p-1}\bigg{[}1 - q\frac{|\dot{a}(t)|}{a(t)}\bigg{]},\qquad q(p-1)=1,\\
\label{6eq2}
c_1\frac{|\dot{a}(t)|}{a(t)} &\le \frac{a^q(t)}{2\lambda}\bigg{[}1 - q\frac{|\dot{a}(t)|}{a(t)}\bigg{]},\\
\label{6eq3}
g_0 \frac{\lambda}{a(0)}&< 1.
\end{align}
\end{lem}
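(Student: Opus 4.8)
The plan is to take the explicit family $a(t)=\frac{d}{(c+t)^b}$ already singled out in Theorem~\ref{thm8}, with $0<b\le p-1$ and $c\ge\max\!\big(1,\frac{2b}{p-1}\big)$, and to choose the two free parameters $\lambda>0$ and $d>0$ so that all three inequalities \eqref{6eq1}--\eqref{6eq3} hold. For this family one computes directly $\dot a(t)=-\frac{db}{(c+t)^{b+1}}$, hence
$$
\frac{|\dot a(t)|}{a(t)}=\frac{b}{c+t},\qquad t\ge 0,
$$
which is the quantity entering every bracket on the right-hand sides. The first step is to record the uniform lower bound on those brackets: since $q=\frac{1}{p-1}$ and $c\ge\frac{2b}{p-1}$, one has $q\frac{|\dot a|}{a}=\frac{b}{(p-1)(c+t)}\le\frac{b}{(p-1)c}\le\frac12$, so that
$$
1-q\frac{|\dot a(t)|}{a(t)}\ge\frac12,\qquad t\ge 0.
$$

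Next I would simplify \eqref{6eq1} using the defining relation $q(p-1)=1$: then $\big(\frac{\lambda}{a^q}\big)^{p-1}=\frac{\lambda^{p-1}}{a^{q(p-1)}}=\frac{\lambda^{p-1}}{a}$, the factor $\frac1a$ cancels against the left-hand side, and \eqref{6eq1} becomes equivalent to $M_p\le\frac12\lambda^{p-1}\big[1-q\frac{|\dot a|}{a}\big]$. By the bracket bound the right-hand side is $\ge\frac14\lambda^{p-1}$ for all $t\ge0$, so \eqref{6eq1} is guaranteed once $\lambda^{p-1}\ge 4M_p$; it thus suffices to fix any $\lambda\ge(4M_p)^{1/(p-1)}$ (any $\lambda>0$ if $M_p=0$). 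Crucially, after the cancellation this requirement no longer depends on $d$.

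With $\lambda$ fixed, I would turn to \eqref{6eq2} and \eqref{6eq3}, which will be forced by taking $d$ large. For \eqref{6eq2}, inserting $a^q=\frac{d^q}{(c+t)^{bq}}$ and $\frac{|\dot a|}{a}=\frac{b}{c+t}$ and using $1-q\frac{|\dot a|}{a}\ge\frac12$ to bound the right-hand side from below by $\frac{a^q}{4\lambda}$ reduces it to the sufficient inequality $c_1 b\le\frac{d^q}{4\lambda}(c+t)^{1-bq}$. Since $b\le p-1$ gives $bq=\frac{b}{p-1}\le 1$, the power $(c+t)^{1-bq}$ is nondecreasing in $t$, so its minimum over $t\ge0$ is $c^{1-bq}\ge1$ (because $c\ge1$); hence \eqref{6eq2} holds as soon as $d^q\ge 4\lambda c_1 b$, that is $d\ge(4\lambda c_1 b)^{p-1}$. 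Finally \eqref{6eq3} reads $g_0\frac{\lambda c^b}{d}<1$, i.e. $d>g_0\lambda c^b$. Choosing
$$
d>\max\!\Big((4\lambda c_1 b)^{p-1},\,g_0\lambda c^b\Big)
$$
secures both, while $a(t)=\frac{d}{(c+t)^b}$ is manifestly positive, strictly decreasing, and tends to $0$, so $a\searrow0$ as required.

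The argument is essentially a sequence of elementary manipulations; the only genuine point is that the three requirements do not conflict. The mild obstacle is the order of the choices: condition \eqref{6eq1} must be arranged first because it pins down $\lambda$ through $M_p$, and only afterwards can $d$ be enlarged to meet \eqref{6eq2}--\eqref{6eq3}; one must check that increasing $d$ does not spoil \eqref{6eq1}, which is clear since after the cancellation \eqref{6eq1} is independent of $d$. The exponent bookkeeping — the cancellation $q(p-1)=1$ in \eqref{6eq1} and the inequality $bq\le1$ controlling the growth of $(c+t)^{1-bq}$ in \eqref{6eq2} — is exactly where the hypotheses $0<b\le p-1$ and $c\ge\max(1,\frac{2b}{p-1})$ enter, and is the most delicate part of the computation.
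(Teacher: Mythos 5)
Your proof is correct and follows essentially the same route as the paper's: the same family $a(t)=\frac{d}{(c+t)^b}$, the same bracket bound $1-q\frac{|\dot a|}{a}\ge\frac12$ from $c\ge 2bq$, the same choice $\lambda\ge(4M_p)^q$ after the cancellation $q(p-1)=1$, and the same lower bound on $d$ (the paper takes $d\ge\max(g_0\lambda c^b+1,(4\lambda c_1 b)^{p-1})$ where you take a strict inequality, an immaterial difference). Your explicit remark that \eqref{6eq1} is independent of $d$, so that $\lambda$ can be fixed first, is the same quantifier ordering the paper uses implicitly.
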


\begin{proof}
Choose the function $a(t)$ and positive constants $b,c$ and $d$ such that 
\begin{equation}
\label{eq44}
a(t)= \frac{d}{(c+t)^b},\qquad c \ge \max \bigg{(}1,\, 2bq \bigg{)},\quad 0< b \le p-1, 
\end{equation}
where the constant $d>0$ will be specified later. 
Then
\begin{equation}
\label{eq452}
q\frac{|\dot{a}(t)|}{a(t)} = \frac{q b}{c+t} \le \frac{q b}{c}
\le \frac{1}{2},\qquad \forall t\ge 0.
\end{equation}
Thus, inequality \eqref{6eq1} holds if
\begin{equation}
\label{lamda}
4M_p \le \lambda^{p-1},
\end{equation}
where the relation $q(p-1)=1$ was used.
Choose $\lambda \ge (4M_p)^q$. Then inequality \eqref{6eq1} is satisfied 
for any  $d>0$.

Choose 
\begin{equation}
\label{eq473}
d \ge  \max \bigg{(} g_0\lambda c^b+1,\, (4\lambda c_1 b)^{p-1} \bigg{)}.
\end{equation}
Then inequality \eqref{6eq3} is satisfied. 
From the relations \eqref{eq44} and  
inequalities \eqref{eq473} 
and  \eqref{eq452}, one gets
\begin{equation}
c_1\frac{|\dot{a}(t)|}{a^{q+1}(t)} = \frac{c_1 b}{d^q (c+t)^{1-bq}} 
\le \frac{c_1 b}{d^q}
\le \frac{c_1 b}{4\lambda c_1 b}=\frac {1}{4\lambda}
\le \frac{1}{2\lambda}\bigg{[}1 - q\frac{|\dot{a}(t)|}{a(t)}\bigg{]},
\end{equation}
where inequality \eqref{eq452} was used.
This implies inequality \eqref{6eq2}. Lemma~\ref{lem2} is proved.
\end{proof}

\begin{rem}
\label{rem7}
{\it One can choose $d$ and $\lambda$ so that
the quantity $\frac{a(0)}{\lambda}$ is uniformly bounded as 
$M_p\to\infty$.}

{\rm Indeed, using inequality \eqref{lamda} one can choose
\begin{equation}
\label{eq492}
\lambda = (4M_p)^q.
\end{equation}
Using inequality \eqref{eq473} one can choose
\begin{equation}
\label{eq502}
d =  \max \bigg{(} g_0\lambda c^b+1,\, (4\lambda c_1 b)^{p-1} \bigg{)}.
\end{equation}
It follows from \eqref{eq492} and \eqref{eq502} 
and the assumption $p\in (1,2)$ that 
{\it the quantity $\frac{a(0)}{\lambda} = \frac{d}{\lambda c^b}$
is bounded as $M_p\to\infty$.}

 Indeed, if $M_p\to \infty$,
then $\lambda \to \infty$, because $M_p=\frac {1}{4}\lambda^{p-1}$
and $p>1$.
Moreover, $\lambda^{p-1}<\lambda$ as $\lambda\to \infty$, because
$p-1<1$. Thus, $d=g_0\lambda c^b+1$ as $\lambda\to \infty$,
so $\frac{d}{\lambda c^b}<g_0+1$ as $\lambda\to \infty$.
}
\end{rem}

\begin{proof}[Proof of Theorem~\ref{thm8}]
Denote
\begin{equation}
\label{8eq1}
w(t):= u(t) - V(t),\qquad g(t):=\|w(t)\|,
\end{equation}
where $V(t)$ solves equation \eqref{eq40} with $a=a(t)$. 

Equation  \eqref{eq46} can be rewritten as
\begin{equation}
\label{8eq3}
\dot{w} = -\dot{V} - A^{-1}_{a(t)}[F(u)-F(V)+a(t)w].
\end{equation}
From inequality \eqref{8eq2} one gets
\begin{equation}
\label{8eq4}
F(u) - F(V) + aw = A_{a}(u)w + K,\qquad \|K\| \le M_p \|w\|^p.
\end{equation}
Multiplying \eqref{8eq3} by $w$ and using \eqref{8eq4}, one obtains
\begin{equation}
\label{eq472}
g\dot{g} \le -g^2 +  M_p \|A_{a(t)}^{-1}\|g^{1+p} + \|\dot{V}\|g.
\end{equation}
Multiply equation \eqref{eq40} (with $a=a(t)$) by $V-y$ and use the 
monotonicity of $F$ to get
\begin{equation}
0 = \langle F(V) + a(t)V - F(y), V - y\rangle \ge \langle a(t)V, V - y \rangle. 
\end{equation}
It follows from this inequality that
\begin{equation}
\label{eq49}
 \|V(t)\|\leq \|y\|,\qquad \forall t \ge 0.
\end{equation}
Differentiating equation \eqref{eq40} (with $a=a(t)$) with respect to $t$, 
one gets
\begin{equation}
F'(V)\dot{V} +a(t)\dot{V}+ \dot{a}(t)V = 0.
\end{equation}
This inequality and inequality \eqref{eq49} imply
\begin{equation}
\label{eq512}
\|\dot{V}\| = \|\dot{a}(t)A_{a(t)}^{-1} V\| \le 
\frac{|\dot{a}(t)|}{a(t)}\|y\|, 
\end{equation}
where the estimate $\|A_a^{-1}\|\leq \frac {1} {a}$ was used.
This estimate holds because of the assumption $A\geq 0$.
From inequalities \eqref{eq472} and \eqref{eq512} and 
the relation $g(t)\ge 0$, one gets
\begin{equation}
\dot{g} \le - g(t) + \frac{c_0}{a(t)}g^p + \frac{|\dot{a}(t)|}{a(t)}c_1,
\qquad c_0:= M_p,\quad c_1:=\|y\|.
\end{equation}
This inequality is inequality \eqref{neq1} with
\begin{equation}
\gamma = 1,\quad \alpha(t) = \frac{c_0}{a(t)},\quad \beta(t) = 
c_1\frac{|\dot{a}(t)|}{a(t)}.
\end{equation} 

Let us now apply Corollary~\ref{corrol2} with 
\begin{equation}
\mu(t) = \frac{\lambda}{a^q(t)},\qquad \lambda>0,\quad q(p-1) = 1,
\quad \theta=\frac{1}{2}, 
\end{equation}
where $\lambda$ and $a(t)$ satisfy conditions \eqref{6eq1}--\eqref{6eq3}. 
From inequalities \eqref{6eq1}--\eqref{6eq3} and Corollary~\ref{corrol2} one concludes that
\begin{equation}
\label{eq51}
g(t) < \frac{a(t)}{\lambda}.
\end{equation}
From the triangle inequality one gets
\begin{equation}
\label{eq52}
\|u(t) - y\| \le \|u(t)-V(t)\| + \|V(t)-y\|.
\end{equation}
One has: 
\begin{equation}
\label{eq86}
\|u(t)-V(t)\|=g(t)\leq \frac{a(t)}{\lambda}\leq 
\frac{a(0)}{\lambda}.
\end{equation} 
Using \eqref{eq86}, one gets
\begin{equation}
\label{eq87}
\|V(t)-y\|\leq \|V(t)\|+\|y\|\leq 2\|y\|.
\end{equation} 
Inequalities \eqref{eq52}--\eqref{eq87}, 
imply the following estimates:
\begin{equation}
\label{stay}
\|u(t) - y\| \le \frac{a(t)}{\lambda}+\|V(t)-y\|\leq \frac{a(0)}{\lambda} 
+ 2\|y\|=:R.
\end{equation}
By Remark~\ref{rem7} one can choose $d$ and $\lambda$ such that 
the quantity $\frac{a(0)}{\lambda}$ is uniformly bounded as 
$M_p\to\infty$.
Thus, 
one concludes that $R$ can be chosen independently of $M_p=M_p(R)$. 
Inequality \eqref{stay} implies that the trajectory of $u(t)$ stays 
for all $t\geq 0$ inside a ball $B(y,R)$, where $R$ is a sufficiently 
large fixed number.

Since $a(t)\to 0$ as $t\to\infty$, it follows from the first inequality in 
\eqref{stay} 
and Lemma~\ref{rejectedlem} that
\begin{equation}
\lim_{t\to\infty}\|u(t) -y\|=0,
\end{equation}
where we took into account that $V(t) = V_{a(t)}$. 
Theorem~\ref{thm8} is proved.
\end{proof}

\end{document}